\def\int{\displaystyle\!int}
\def\lim{\displaystyle\!lim}
\def\sum{\displaystyle\!sum}
\def\sup{\displaystyle\!sup}
\def\inf{\displaystyle\!inf}
\def\cap{\displaystyle\!cap}
\def\max{\displaystyle\!max}
\def\min{\displaystyle\!min}
\def\frac{\displaystyle\!frac}
\newtheorem{theorem}{\bf Theorem}[section]
\newtheorem{lemma}{\bf Lemma}[section]
\begin{document}

\title{
 Hopf bifurcation of a free boundary problem modeling tumor growth
with angiogenesis and two time delays}

\author{Haihua Zhou, Zejia Wang, Daming Yuan, Huijuan Song\thanks
{Corresponding author. E-mail: {\it songhj@jxnu.edu.cn}.} \quad
\\
{\normalsize School of Mathematics and Statistics,}
\\
{\normalsize Jiangxi Normal University, Nanchang
330022, P. R. China}
}

\date{}

\maketitle

\begin{abstract}
This paper concerns a free boundary problem modeling tumor growth with angiogenesis and two time delays. The two delays represent the time taken for cells to undergo mitosis and
modify the rate of cell loss because of apoptosis, respectively. We study the stability of stationary solutions and find that Hopf bifurcation occurs under some conditions, which extends the results of Xu. Furthermore, numerical simulations are performed to investigate the relationship among the rate of angiogenesis, two time delays and Hopf bifurcation.
\\
\\
{Keywords:} Tumor model; free boundary problem; angiogenesis; time delay; Hopf bifurcation
\\
{2020 Mathematics Subject Classification:} 35R35, 35K57, 35B35
\end{abstract}

\section{Introduction}

Over the past 50 years, a variety of free boundary problems of partial differential equations have been proposed to model the growth of solid tumors. Accordingly, asymptotic analysis, numerical simulations and rigorous mathematical analysis of such models have drawn great attention, and many interesting results have been established; see \cite{BC-95,BC-96,By-97,CX-07,FB-03,FL-15,FR-99,SZX-11,Pi-08,Xu-09,Xu-20,XZ-20,ZH-20} and references cited therein.

In this paper, we study the following free boundary problem which models the growth of a spherically symmetric tumor with angiogenesis and two delays:
\begin{align}
&\frac{1}{r^2}\frac{\partial}{\partial r}\left(r^2\frac{\partial\sigma}{\partial r}\right)=\Gamma\sigma, && 0<r<R(t), ~t>0,\label{xb-415}
\\
&\frac{\partial\sigma}{\partial r}(0,t)=0,\quad
\frac{\partial\sigma}{\partial r}(R(t),t)+\alpha[\sigma(R(t),t)-\sigma_\infty]=0,&& t>0, \label{xb-416}
\\
&\frac{d}{dt}\frac{4\pi R^3(t)}{3}=4\pi\int_0^{R(t-\tau_1)}\mu\sigma(r, t-\tau_1)r^2dr-4\pi\int_0^{R(t-\tau_2)}\mu\tilde{\sigma}r^2dr,
&& t>0,
\label{xb-417}
\\
&R(t)=\varphi(t), &&-\tau\le t\le 0, \label{xb-418}
\end{align}
where $R(t)$ denotes the unknown radius of the tumor at time $t$, $\sigma(r,t)$ is an unknown function describing the nutrient concentration within the tumor, $r$ represents the radial variable in $\mathbb{R}^3$, $\Gamma$ is the consumption rate coefficient of the nutrient by tumor cells, $\sigma_\infty$ is the concentration of nutrient in the host tissue of the tumor, $\alpha$ is a positive constant measuring the strength of the blood vessel system of the tumor: the smaller $\alpha$ is, the weaker the blood vessel system of the tumor will be; $\alpha=0$ corresponds to the case that the tumor does not have its own capillary vessel system so that nutrient is isolated on both sides of the surface of the tumor, and $\alpha=\infty$ indicates that the tumor is all surrounded by the blood vessels which reduces to the Dirichlet boundary condition
\begin{equation}
\label{eq(1.1)}
\sigma(R(t),t)=\sigma_\infty,\quad t>0.
\end{equation}

The equation \eqref{xb-417} is derived from conservation of mass, and the two terms on its right-hand side are explained as follows: the first term is the total volume increase in a unit time interval induced by cell proliferation; $\mu\sigma$ is the cell proliferation rate in unit volume.
The second term is total volume shrinkage in a unit time interval caused by cell apoptosis, or cell death due to aging; the rate of cell apoptosis is assumed to be a constant $\mu\tilde\sigma$ which is independent of $\sigma$. As in \cite{Pi-08,SZX-11,Xu-09},
here we denote by $\tau_1$ the time delay in cell proliferation, i.e., $\tau_1$ is the length of the period that a tumor cell undergoes a full process of mitosis,
and by $\tau_2$ the time taken for the cell to modify the rate of cell loss due to apoptosis. Finally, $\varphi(t)$ is a given positive function and $\tau=\max\{\tau_1, \tau_2\}$.

It was Byrne \cite{By-97} who initiated the idea of adding time delay on tumor growth models. Since then, interest in a study of the effects of time delay on the growth of tumors has arisen; see \cite{FB-03,CX-07,Pi-08,SZX-11,Xu-09,Xu-20,XZ-20,ZH-20} for example. In particular, except for significant biological background, equations with two time delays also have distinct physical background \cite{LRW-99,Pi-07}. Based on the Byrne-Chaplain inhibitor-free tumor model \cite{BC-95}, the model \eqref{xb-415}--\eqref{xb-418} under study introduces angiogenesis as in \cite{FL-15} and two independent time delays as in \cite{Pi-08,SZX-11,Xu-09}, respectively. If $\tau_2=0$, then the model \eqref{xb-415}--\eqref{xb-418} was studied in \cite{Xu-20,XZ-20}, where it was proved that the magnitude of the time delay in cell proliferation, i.e., $\tau_1$, does not affect the final dynamical behavior of solutions, but may have an effect on the rate of the evolvement process. While if $\tau_2>0$ and the Dirichlet boundary condition \eqref{eq(1.1)} is imposed, Xu \cite{Xu-09} found that being different from the case $\tau_2=0$ \cite{CX-07}, Hopf bifurcation may occur and the influence of time delays on the Hopf bifurcation was discussed; see \cite{SZX-11} for the model with an action of inhibitor.

Motivated by \cite{SZX-11,Xu-09}, we aim at investigating the stability of stationary solutions of the system \eqref{xb-415}--\eqref{xb-418}, as well as the influence of angiogenesis and two time delays on the Hopf bifurcation when one of delays is used as a bifurcation parameter. The analysis will be made in the framework of delay differential equations.

The structure of this paper is as follows. In Section 2, we establish the existence and uniqueness of transient solutions. In Section 3, we make analysis of the stability of stationary solutions and the existence of local Hopf bifurcation. In Section 4, we perform some numerical simulations by using Matlab for two purposes: one is to verify our theoretical results, the other is to observe the relationship among the rate of angiogenesis $\alpha$, two time delays $\tau_1$, $\tau_2$, and Hopf bifurcation.

\section{Existence and uniqueness of transient solutions}

It was proved in \cite{FL-15} that a unique solution of \eqref{xb-415}--\eqref{xb-416} is given explicitly by
\begin{equation}
\label{xb-431}
\sigma(r,t)=\frac{\alpha\sigma_{\infty} }{\alpha+\sqrt{\Gamma}g(\sqrt{\Gamma}R(t))} \frac{f(\sqrt{\Gamma}r)}{f(\sqrt{\Gamma}R(t))}\quad{\rm for}~0<r<R(t),~t>0,
\end{equation}
where
$$
f(x)=\frac{\sinh x}{x},\quad
g(x)=\frac{f'(x)}{f(x)}=\coth x-\frac{1}{x}.
$$
Setting
\begin{equation}
\label{eq(2.5)}
a_1=\mu\sigma_\infty,\quad\Lambda=\frac{\tilde\sigma}{3\sigma_\infty},\quad
\eta(t)=\sqrt{\Gamma}R(t),
\end{equation}
\begin{equation}
\label{eq(2.1)}
p(x)=\frac{g(x)}{x}=\frac{x\coth x-1}{x^2},\quad
l(x)=\frac{\alpha p(x)}{\alpha+\sqrt{\Gamma}g(x)}
\end{equation}
and plugging \eqref{xb-431} into \eqref{xb-417}, one obtains
\begin{align}
\label{xb-432}
\frac{d\eta(t)}{dt}&=a_1\eta(t)\left[\frac{\alpha p(\eta(t-\tau_1))}{\alpha+\sqrt{\Gamma}g(\eta(t-\tau_1))}\left(\frac{\eta(t-\tau_1)}{\eta(t)}\right)^3
-\Lambda\left(\frac{\eta(t-\tau_2)}{\eta(t)}\right)^3\right]\nonumber
\\
&=a_1 \eta(t)\left[l(\eta(t-\tau_1))\left(\frac{\eta(t-\tau_1)}{\eta(t)}\right)^3
-\Lambda\left(\frac{\eta(t-\tau_2)}{\eta(t)}\right)^3\right].
\end{align}
If we further denote
$$
\omega(t)=\eta^{3}(t),\quad
a=3a_1,
$$
then it follows from \eqref{xb-432} that
\begin{equation}
\label{xb-433}
\frac{d\omega(t)}{dt}=a\left[l\left(\omega^{1/3}(t-\tau_1)\right)\omega(t-\tau_1)
-\Lambda\omega(t-\tau_2)\right].
\end{equation}
Recalling that $\tau=\max\{\tau_1,\tau_2\}$, by the step method (see \cite{Ha-77}), if \eqref{xb-433} allows a solution for $t\in[-\tau, n\tau_3]$,
then the solution for $t\in[n\tau_3, (n+1)\tau_3]$, where $n\in\mathbb{N}$, $\tau_3=\min\{\tau_1, \tau_2\}$, must have the form
$$
\omega(t)=\omega(n\tau_3)+\int_{n\tau_3}^t a\left[l\left(\omega^{1/3}(s-\tau_1)\right)\omega(s-\tau_1)-\Lambda\omega(s-\tau_2)\right]ds.
$$
Since $s-\tau_1$, $s-\tau_2\in[-\tau, n\tau_3]$ for any $s\in[n\tau_3,(n+1)\tau_3]$, the step method guarantees the existence of a unique solution to the equation \eqref{xb-433} together with a positive initial function $\omega^0$.

The above discussion can be put together in the form of a theorem.

\begin{theorem}
\label{thm-2.1}
The problem \eqref{xb-415}--\eqref{xb-418} admits a unique solution $(\sigma(r,t),R(t))$ with $\sigma(r,t)$ defined by \eqref{xb-431} and $R(t)=\omega^{1/3}(t)/\sqrt{\Gamma}$, where $\omega(t)$ is the solution of the equation \eqref{xb-433} subject to the initial condition $\omega^0=(\sqrt{\Gamma}\varphi(t))^3$.
\end{theorem}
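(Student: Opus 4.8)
Since the statement is precisely the outcome of the reduction carried out above, my plan is to check that that reduction loses no information at any stage and that the scalar delay equation it produces is uniquely solvable. The first stage eliminates $\sigma$: for every frozen $t$, problem \eqref{xb-415}--\eqref{xb-416} is a linear two-point boundary value problem for $\sigma(\cdot,t)$ on $(0,R(t))$ — a radial modified Helmholtz equation with a no-flux (equivalently, regularity) condition at $r=0$ and a Robin condition at $r=R(t)$ — whose unique classical solution is exactly \eqref{xb-431}, a fact I take from \cite{FL-15}. Thus $\sigma$ is a smooth, explicit functional of the single unknown $R(t)$, and solving \eqref{xb-415}--\eqref{xb-418} is equivalent to solving \eqref{xb-417}--\eqref{xb-418} for $R$. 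The second stage inserts \eqref{xb-431} into \eqref{xb-417} and evaluates the radial integrals in closed form; this is possible because the function $r\mapsto f(\sqrt{\Gamma}r)$ solves the same modified Helmholtz equation, so by the divergence theorem $\int_0^{R}f(\sqrt{\Gamma}r)r^2\,dr$ collapses to the boundary value $R^3 p(\sqrt{\Gamma}R)f(\sqrt{\Gamma}R)$, which is precisely what produces the factor $l$ in \eqref{xb-432}. Passing to $\eta=\sqrt{\Gamma}R$ and then to $\omega=\eta^3$, $a=3a_1$ turns \eqref{xb-432} into \eqref{xb-433}; since $\eta\mapsto\eta^3$ is a smooth bijection of $(0,\infty)$, this change of unknown loses nothing, and the datum \eqref{xb-418} becomes $\omega^0=(\sqrt{\Gamma}\varphi)^3$, which is positive (and continuous, as is implicitly assumed of $\varphi$).

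The third stage is the method of steps applied to \eqref{xb-433}. With $\tau_3=\min\{\tau_1,\tau_2\}>0$, one checks that for $s\in[n\tau_3,(n+1)\tau_3]$ both $s-\tau_1$ and $s-\tau_2$ lie in $[-\tau,n\tau_3]$; hence on that interval the right-hand side of \eqref{xb-433} depends only on values of $\omega$ that are already known, the displayed integral representation of $\omega$ on $[n\tau_3,(n+1)\tau_3]$ is an explicit formula rather than a fixed-point equation, and it defines a continuous function that is $C^1$ in the interior and agrees with the preceding piece at $t=n\tau_3$. Beginning from $n=0$ with datum $\omega^0$ on $[-\tau,0]$ and iterating in $n$, one produces a function $\omega$ on $[-\tau,\infty)$, continuous there and $C^1$ for $t>0$, solving \eqref{xb-433}; uniqueness is automatic, since at each step the formula leaves no freedom. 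Transcribing back via $R=\omega^{1/3}/\sqrt{\Gamma}$ and recovering $\sigma$ from \eqref{xb-431} — which is legitimate because $\alpha+\sqrt{\Gamma}g(\sqrt{\Gamma}R)\ge\alpha>0$ and $f,g$ are smooth — yields the asserted classical solution $(\sigma(r,t),R(t))$, whose uniqueness follows from that of $\omega$.

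The one point I would treat with care is that the last two stages make sense only while $\omega$ stays strictly positive: otherwise $\omega^{1/3}$ is meaningless and the denominator in $l(\omega^{1/3})$ could even vanish. So within each step interval I would first argue that positivity propagates — on $[n\tau_3,(n+1)\tau_3]$ the integrand in \eqref{xb-433} is a bounded continuous function of $s$ once the past is fixed, which rules out blow-up, and one shows $\omega$ cannot reach $0$ either, by the comparison/positivity argument standard for this family of tumor models (or by a direct lower estimate of the explicit integral on each step). Granting this, the induction on $n$ closes for all $t\ge0$; without it, the same construction still gives a unique solution on a maximal forward interval. This positivity propagation, rather than any of the algebraic manipulations, is where I expect the only real work to lie.
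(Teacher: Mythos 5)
Your argument is essentially identical to the paper's: it quotes the explicit solution \eqref{xb-431} from \cite{FL-15}, substitutes into \eqref{xb-417} to get \eqref{xb-432}--\eqref{xb-433}, and then applies the method of steps on the intervals $[n\tau_3,(n+1)\tau_3]$, exactly as in Section 2. The positivity issue you flag at the end is a genuine subtlety, but the paper does not address it either (it simply assumes a positive initial function and asserts the step method "guarantees" a unique solution), so your proposal is, if anything, more careful than the published proof on this point.
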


\section{Stability of stationary solutions and existence of local Hopf bifurcation}

We start with the analysis of stationary solutions of \eqref{xb-433}. Notice that the function $p(x)$ defined in \eqref{eq(2.1)} has the following properties: (see \cite{FL-15})
\begin{equation*}
p'(x)<0\quad{\rm for}~x>0,\qquad
\lim_{x\to0^+}p(x)=\frac{1}{3}, \qquad
\lim_{x\to\infty}p(x)=0,
\end{equation*}
which implies
\begin{equation}
\label{eq(2.4)}
l'(x)=\frac{\alpha^2p'(x)-\alpha\sqrt{\Gamma}p^2(x)}{(\alpha+\sqrt{\Gamma}g(x))^2}<0\quad{\rm for}~x>0
\end{equation}
and
\begin{equation}
\label{xb-4310}
\lim_{x\to0^+}l(x)=\frac{1}{3}, \quad \lim_{x\to\infty}l(x)=0.
\end{equation}
Thus, if $\sigma_\infty>\tilde\sigma$, then \eqref{xb-433} has a trivial stationary solution and a unique positive stationary solution $\omega_s$ satisfying
\begin{equation}
\label{xb-4311}
l\left(\omega_s^{1/3}\right)=\Lambda,
\end{equation}
and if $\sigma_\infty\le\tilde\sigma$, then the equation \eqref{xb-433} has only trivial stationary solution and no positive stationary solutions.

Before proceeding further, for
completeness and the reader's convenience, we state a stability criterion for
general delay differential equations; see \cite{SZX-11}.

\begin{lemma}
\label{lem-3.1}
Consider the equation
\begin{equation}
\label{eq(3.1)}
\frac{dx}{dt}=f(x(t-r_1),x(t-r_2)),
\end{equation}
with a nonnegative initial continuous function $x^0:[-r_0,0]\to(0,\infty)$, where $r_1$,
$r_2$ are the positive constants, $r_0=\max\{r_1,r_2\}$, and $f$ is a continuously
differentiable nonlinear function. Assume that \eqref{eq(3.1)} has a trivial stationary solution,
that is, $f(0,0)=0$. Let the linearized equation around the trivial solution of \eqref{eq(3.1)} be as follows:
\begin{equation}
\label{eq(3.2)}
\frac{dx}{dt}=-B_1x(t-r_1)-B_2x(t-r_2).
\end{equation}
Then

(1) if $B_1<0$, $B_2>|B_1|$ and $r_1\in(0,\pi/(2\sqrt{B_2^2-B_1^2})]$, then there exists $r_2^0>0$ such that for $r_2\in[0,r_2^0)$ the trivial solution to \eqref{eq(3.1)} is asymptotically stable and for $r_2=r_2^0$ the Hopf bifurcation occurs;

(2) if $B_1<0$, $0<B_2<|B_1|$, the trivial solution to \eqref{eq(3.1)} is unstable independently on the values of both delays, and there is no Hopf bifurcation.
\end{lemma}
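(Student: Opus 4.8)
The plan is to study the root distribution of the characteristic equation of the linearization \eqref{eq(3.2)},
$$\Delta(\lambda;r_2):=\lambda+B_1e^{-\lambda r_1}+B_2e^{-\lambda r_2}=0,$$
treating $r_2$ as the bifurcation parameter and keeping $r_1$ fixed. Since $B_1<0$ in both parts I write $B_1=-|B_1|$. By the principle of linearized stability for delay equations, the trivial solution of \eqref{eq(3.1)} is locally asymptotically stable exactly when every root of $\Delta(\cdot\,;r_2)=0$ has negative real part, while a Hopf bifurcation at $r_2=r_2^{0}$ will follow from exhibiting a conjugate pair $\pm i\omega^{*}$ ($\omega^{*}>0$) of simple roots crossing the imaginary axis transversally there, with all other roots in the open left half-plane and no resonance.

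I would dispose of part (2) first, as it is shorter. When $0<B_2<|B_1|$ one has $\Delta(0;r_2)=B_1+B_2=B_2-|B_1|<0$, whereas $\Delta(\lambda;r_2)\to+\infty$ as $\lambda\to+\infty$ along the reals, so the intermediate value theorem gives a positive real root of $\Delta(\cdot\,;r_2)=0$ for every $r_1,r_2\ge0$. Hence the trivial solution is unstable regardless of the two delays, and, never being stable, it cannot undergo a stability-changing (Hopf) bifurcation.

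For part (1), I would first examine the endpoint $r_2=0$, where \eqref{eq(3.1)} linearizes to $\dot x(t)=-B_2x(t)+|B_1|x(t-r_1)$ with characteristic equation $\lambda+B_2-|B_1|e^{-\lambda r_1}=0$; a purely imaginary root $i\omega$ would force $\cos(\omega r_1)=B_2/|B_1|>1$, which is impossible, so no root meets the imaginary axis as $r_1$ varies, and since the root at $r_1=0$ equals $|B_1|-B_2<0$ the trivial solution is asymptotically stable at $r_2=0$ for all $r_1\ge0$. For $r_2>0$ I then seek imaginary roots $\lambda=i\omega$, $\omega>0$: splitting $\Delta(i\omega;r_2)=0$ into real and imaginary parts yields $B_2\cos(\omega r_2)=|B_1|\cos(\omega r_1)$ and $B_2\sin(\omega r_2)=\omega+|B_1|\sin(\omega r_1)$, and summing their squares gives the frequency equation
$$h(\omega):=\omega^{2}+2|B_1|\,\omega\sin(\omega r_1)+B_1^{2}-B_2^{2}=0.$$
Here $B_2>|B_1|$ forces $h(0)=B_1^{2}-B_2^{2}<0$, every such frequency obeys $(\omega-|B_1|)^{2}\le B_2^{2}$ (so $\omega\le|B_1|+B_2$ and there are finitely many), and the hypothesis $r_1\le\pi/(2\sqrt{B_2^{2}-B_1^{2}})$ makes $\omega r_1\le\pi/2$ on $(0,\sqrt{B_2^{2}-B_1^{2}}\,]$, whence $\sin(\omega r_1)>0$, $\cos(\omega r_1)\ge0$, and $h'(\omega)=2\omega+2|B_1|\sin(\omega r_1)+2|B_1|\omega r_1\cos(\omega r_1)>0$ there; together with $h(\sqrt{B_2^{2}-B_1^{2}}\,)>0$ this produces a simple root $\omega^{*}\in(0,\sqrt{B_2^{2}-B_1^{2}})$ with $\omega^{*}r_1<\pi/2$. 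For each admissible frequency the two equations above fix $r_2$ modulo $2\pi/\omega$, and I let $r_2^{0}$ be the least positive value so obtained, which is positive (since $\Delta(i\omega;0)=0$ has no solution) and finite (since $\omega^{*}$ supplies one).

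It remains to check transversality and to account for stability on $[0,r_2^{0})$. Differentiating $\Delta(\lambda;r_2)=0$ in $r_2$ gives $\frac{d\lambda}{dr_2}\bigl(1-B_1r_1e^{-\lambda r_1}-B_2r_2e^{-\lambda r_2}\bigr)=B_2\lambda e^{-\lambda r_2}$, and substituting $B_2e^{-\lambda r_2}=-(\lambda+B_1e^{-\lambda r_1})$ and evaluating at $\lambda=i\omega^{*}$, $r_2=r_2^{0}$, a routine computation reduces $\operatorname{Re}\bigl(d\lambda/dr_2\bigr)^{-1}$ to a positive multiple of $h'(\omega^{*})$; hence $\operatorname{Re}(d\lambda/dr_2)$ shares the sign of $h'(\omega^{*})>0$ and the pair of roots crosses into the right half-plane. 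Since at $r_2=0$ all roots lie in the open left half-plane and, by continuity of the spectrum, their number in any right half-plane can change only when a root touches the imaginary axis — which first occurs at $r_2^{0}$ — the trivial solution stays asymptotically stable for $r_2\in[0,r_2^{0})$, and at $r_2=r_2^{0}$ the Hopf bifurcation theorem for delay equations \cite{Ha-77} applies once one verifies that $\pm i\omega^{*}$ are simple and that no $ik\omega^{*}$ with $k\ge2$ is a root. The step I expect to be the main obstacle is this last bookkeeping: confirming that the least positive delay over all roots of $h$ is attained by $\omega^{*}$ and that the first crossing is genuinely outward, since $h$ need not have a unique positive root; the bound on $r_1$ is used precisely to isolate a simple root $\omega^{*}$ in the first quadrant, which is what forces the transversality quantity to be strictly positive.
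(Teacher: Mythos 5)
The paper gives no proof of Lemma \ref{lem-3.1}: it is stated as a known stability criterion quoted from \cite{SZX-11} (which in turn rests on \cite{Pi-08,LRW-99}), so there is no in-paper argument to compare yours against; what follows is an assessment of your proposal on its own terms. Your characteristic-equation analysis is the standard route and is essentially sound. Part (2) via $\Delta(0;r_2)=B_2-|B_1|<0$ and the intermediate value theorem is correct. In part (1), the stability at $r_2=0$, the frequency function $h(\omega)=\omega^2+2|B_1|\omega\sin(\omega r_1)+B_1^2-B_2^2$, the role of the hypothesis $r_1\le\pi/(2\sqrt{B_2^2-B_1^2})$ in forcing $\sin(\omega r_1)>0$, $h(\sqrt{B_2^2-B_1^2})>0$ and $h'>0$ on $(0,\sqrt{B_2^2-B_1^2}\,]$, and the reduction of $\operatorname{Re}(d\lambda/dr_2)^{-1}$ at a crossing to $\omega h'(\omega)/(2|D|^2)$ all check out. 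The one genuine gap is the one you flag yourself: $h$ may have further roots in $(\sqrt{B_2^2-B_1^2},\,|B_1|+B_2]$, and the minimal crossing delay $r_2^0$ might a priori be attained at such a root rather than at $\omega^*$. Note, however, that your own continuity argument closes most of this: since no root lies in the closed right half-plane for $r_2<r_2^0$, the first touching of the imaginary axis cannot be an inward crossing, so necessarily $h'\ge0$ at whatever frequency realizes $r_2^0$; only the degenerate tangential case $h'=0$ and the simplicity/non-resonance checks remain, which is precisely the bookkeeping carried out in \cite{SZX-11,LRW-99}. A small caveat on part (2): ``never stable, hence no Hopf bifurcation'' identifies Hopf bifurcation with exchange of stability --- purely imaginary roots can still occur and cross when $0<B_2<|B_1|$ --- but this is the reading intended by the source from which the lemma is quoted.
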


We are now ready to give and prove the main results of this section, concerning the stability of stationary solutions and the existence of local Hopf bifurcation. We will take $\tau_2$ as a bifurcation parameter, for the biological meaning see \cite{Pi-08} and the references therein.
First, the stability of positive stationary solutions of the equation \eqref{xb-433} is investigated.

\begin{theorem}
\label{mainth-t434}
Let $\sigma_\infty>\tilde\sigma$ and $\tau_1\in(0,\pi/(2\sqrt{A_2^2-A_1^2})]$ with
\begin{equation}
\label{eq(2.2)}
A_1=-\frac{a}{3}[\omega_s^{1/3}l'(\omega_s^{1/3})+3l(\omega_s^{1/3})],\quad
A_2=a\Lambda.
\end{equation}
Then there exists $\tau_2^*>0$ such that for
$\tau_2\in[0, \tau_2^*)$ the positive stationary solution $\omega_s$ to \eqref{xb-433} is asymptotically stable, and for $\tau_2=\tau_2^*$ the Hopf bifurcation occurs.
\end{theorem}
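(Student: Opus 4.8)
The plan is to reduce the assertion to part~(1) of Lemma~\ref{lem-3.1} by translating the positive equilibrium $\omega_s$ to the origin. Setting $x(t)=\omega(t)-\omega_s$, equation \eqref{xb-433} becomes
\[
\frac{dx}{dt}=a\left[l\big((x(t-\tau_1)+\omega_s)^{1/3}\big)\,(x(t-\tau_1)+\omega_s)-\Lambda\,(x(t-\tau_2)+\omega_s)\right]=:f\big(x(t-\tau_1),x(t-\tau_2)\big),
\]
and by \eqref{xb-4311} we have $f(0,0)=a\omega_s\big(l(\omega_s^{1/3})-\Lambda\big)=0$, so $x\equiv 0$ is a trivial stationary solution; moreover $f$ is $C^1$ near $(0,0)$ because $\omega_s>0$ and $l$ is smooth on $(0,\infty)$. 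Hence Lemma~\ref{lem-3.1} applies with $r_1=\tau_1$ and $r_2=\tau_2$.

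Next I would linearize. Writing $G(u)=l\big((u+\omega_s)^{1/3}\big)(u+\omega_s)$ and differentiating, $G'(0)=\tfrac13\omega_s^{1/3}l'(\omega_s^{1/3})+l(\omega_s^{1/3})$, so the linearization of the translated equation around $x\equiv 0$ is
\[
\frac{dx}{dt}=\frac{a}{3}\big[\omega_s^{1/3}l'(\omega_s^{1/3})+3l(\omega_s^{1/3})\big]\,x(t-\tau_1)-a\Lambda\,x(t-\tau_2),
\]
which is precisely \eqref{eq(3.2)} with $B_1=A_1$ and $B_2=A_2$ as defined in \eqref{eq(2.2)}; note $B_2=a\Lambda>0$.

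It then remains to check the sign hypotheses $B_1<0$ and $B_2>|B_1|$ in Lemma~\ref{lem-3.1}(1). The inequality $A_1<0$ is equivalent to $\omega_s^{1/3}l'(\omega_s^{1/3})+3l(\omega_s^{1/3})>0$, i.e., with $\psi(x):=x^3l(x)$, to $\psi'(x)=x^2\big(xl'(x)+3l(x)\big)>0$ at $x=\omega_s^{1/3}$. Since $g(x)=xp(x)$, one rewrites
\[
\psi(x)=\frac{\alpha x^2 g(x)}{\alpha+\sqrt{\Gamma}\,g(x)}=x^2\cdot\frac{\alpha g(x)}{\alpha+\sqrt{\Gamma}\,g(x)};
\]
the map $s\mapsto \alpha s/(\alpha+\sqrt{\Gamma}\,s)$ is positive and strictly increasing for $s>0$, while $g(x)=\coth x-1/x$ satisfies $g(x)>0$ and $g'(x)=\tfrac{1}{x^2}-\tfrac{1}{\sinh^2 x}>0$ for $x>0$, so both factors $x^2$ and $\alpha g(x)/(\alpha+\sqrt{\Gamma}g(x))$ are positive and strictly increasing; hence $\psi$ is increasing, $\psi'>0$ on $(0,\infty)$, and $A_1<0$. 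Granted $A_1<0$, the inequality $A_2>|A_1|$ follows at once from $l(\omega_s^{1/3})=\Lambda$ and $l'<0$ (by \eqref{eq(2.4)}):
\[
|A_1|=-A_1=\frac{a}{3}\big[\omega_s^{1/3}l'(\omega_s^{1/3})+3\Lambda\big]<a\Lambda=A_2.
\]

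Finally, with $A_1<0$, $A_2>|A_1|$ and the assumed $\tau_1\in\big(0,\pi/(2\sqrt{A_2^2-A_1^2}\,)\big]$, part~(1) of Lemma~\ref{lem-3.1} yields a number $\tau_2^*>0$ (playing the role of its $r_2^0$) such that the translated zero solution is asymptotically stable for $\tau_2\in[0,\tau_2^*)$ and undergoes a Hopf bifurcation at $\tau_2=\tau_2^*$; translating back gives the asymptotic stability of $\omega_s$ for $\tau_2\in[0,\tau_2^*)$, hence of the stationary solution of \eqref{xb-415}--\eqref{xb-418} via Theorem~\ref{thm-2.1}, together with the Hopf bifurcation at $\tau_2=\tau_2^*$. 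The only step that is not routine bookkeeping is the monotonicity argument for $\psi(x)=x^3l(x)$ establishing $A_1<0$ (which simultaneously gives $A_2>|A_1|$); the rest is the translation, a derivative computation, and matching constants to the cited stability criterion.
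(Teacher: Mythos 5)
Your proof is correct and follows essentially the same route as the paper: translate $\omega_s$ to the origin, compute the linearization to identify $B_1=A_1$, $B_2=A_2$, verify $A_1<0$ via monotonicity of $y^3l(y)$ and then $A_2>|A_1|$ from $l'<0$ and $l(\omega_s^{1/3})=\Lambda$, and invoke Lemma~\ref{lem-3.1}(1). The only (harmless) difference is in proving $\big(y^3l(y)\big)'>0$: the paper writes $1/(y^3l(y))=1/(y^3p(y))+\sqrt{\Gamma}/(\alpha y^2)$ and cites $(y^3p(y))'>0$ from the literature, whereas you factor $y^3l(y)=y^2\cdot\alpha g(y)/(\alpha+\sqrt{\Gamma}g(y))$ and check directly that $g'>0$, which makes that step self-contained.
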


\begin{proof}
Linearizing \eqref{xb-433} around the positive stationary solution $\omega_s$ and letting $v(t)=\omega(t)-\omega_s$, we get
\begin{equation*}
\frac{dv(t)}{dt}=-A_1v(t-\tau_1)-A_2v(t-\tau_2),
\end{equation*}
where $A_1$, $A_2$ are respectively defined in \eqref{eq(2.2)}.
We claim that $A_1<0$. Indeed, consider the function $H(y)=y^3l(y)$ and by \eqref{eq(2.1)},
$$
\frac{1}{H(y)}=\frac{1}{y^3p(y)}+\frac{\sqrt{\Gamma}}{\alpha y^2}.
$$
Since $(p(y)y^3)'>0$ for $y>0$ (see \cite{CX-07}), we have
$$
\left(\frac{1}{H(y)}\right)'<0\quad{\rm for}~y>0.
$$
Thus, $H'(y)>0$ for $y>0$, which gives $yl'(y)+3l(y)>0$ for any $y>0$. The fact that $A_1<0$ is therefore obtained. Furthermore, in view of \eqref{eq(2.4)} and \eqref{xb-4311},
$$
|A_1|=\frac{a}{3}[\omega_s^{1/3}l'(\omega_s^{1/3})+3 l(\omega_s^{1/3})]<al(\omega_s^{1/3})=a\Lambda=A_2.
$$
As a direct application of Lemma \ref{lem-3.1}, the desired result follows and the proof is complete.
\end{proof}

Next, we analyze the stability of the zero solution of \eqref{xb-433}.

\begin{theorem}
\label{thm-3.1}
(i) If $\sigma_\infty>\tilde\sigma$, then the trivial stationary solution to \eqref{xb-433} is unstable independently of the values of both delays, and there is no Hopf bifurcation.

(ii) If $\sigma_\infty<\tilde\sigma$ and $\tau_1\in(0,\pi/(2 \mu\sqrt{\tilde{\sigma}^2-\sigma^2_\infty})]$, then there exists $\tau_2^{**}>0$ such that
for $\tau_2\in[0, \tau_2^{**})$ the trivial stationary solution to \eqref{xb-433} is asymptotically stable, and for $\tau_2=\tau_2^{**}$ the Hopf bifurcation occurs.
\end{theorem}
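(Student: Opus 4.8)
The plan is to follow the proof of Theorem \ref{mainth-t434}: linearize \eqref{xb-433} around the trivial stationary solution, identify the coefficients $B_1,B_2$ of the linearized equation \eqref{eq(3.2)}, and then quote Lemma \ref{lem-3.1}. The only delicate point is linearizing the proliferation term $l(\omega^{1/3}(t-\tau_1))\omega(t-\tau_1)$ at $\omega=0$, since $\omega\mapsto\omega^{1/3}$ fails to be smooth there. Set $G(\omega)=\omega\,l(\omega^{1/3})$; since $G(0)=0$ and, by \eqref{xb-4310},
\begin{equation*}
\lim_{\omega\to0^+}\frac{G(\omega)}{\omega}=\lim_{x\to0^+}l(x)=\frac{1}{3},
\end{equation*}
the right-hand derivative of $G$ at $0$ equals $1/3$; moreover a short computation from $\coth x=x^{-1}+x/3+O(x^3)$ gives $l(x)=1/3+O(x)$, hence $G(\omega)=\omega/3+O(\omega^{4/3})$, so $G$ extends to a $C^1$ function up to $\omega=0$ and the right-hand side of \eqref{xb-433} is continuously differentiable on the physical range $\omega\ge0$. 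Since solutions satisfy $\omega(t)=(\sqrt{\Gamma}R(t))^3>0$, writing $v(t)=\omega(t)$ the linearization about the trivial solution is
\begin{equation*}
\frac{dv(t)}{dt}=\frac{a}{3}v(t-\tau_1)-a\Lambda v(t-\tau_2).
\end{equation*}

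Comparing with \eqref{eq(3.2)}, this gives $B_1=-a/3$ and $B_2=a\Lambda$. Using $a=3a_1=3\mu\sigma_\infty$ and $\Lambda=\tilde\sigma/(3\sigma_\infty)$ from \eqref{eq(2.5)}, we obtain $B_1=-\mu\sigma_\infty<0$ and $B_2=\mu\tilde\sigma>0$; thus $|B_1|=\mu\sigma_\infty$ and $B_2=\mu\tilde\sigma$ irrespective of the sign of $\sigma_\infty-\tilde\sigma$, and $B_1<0$ always.

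For part (i), $\sigma_\infty>\tilde\sigma$ gives $0<B_2=\mu\tilde\sigma<\mu\sigma_\infty=|B_1|$, and Lemma \ref{lem-3.1}(2) applies verbatim: the trivial solution is unstable for all $\tau_1,\tau_2$ and no Hopf bifurcation occurs. For part (ii), $\sigma_\infty<\tilde\sigma$ gives $B_2=\mu\tilde\sigma>\mu\sigma_\infty=|B_1|$, and
\begin{equation*}
\sqrt{B_2^2-B_1^2}=\mu\sqrt{\tilde\sigma^2-\sigma_\infty^2},
\end{equation*}
so the hypothesis $\tau_1\in(0,\pi/(2\mu\sqrt{\tilde\sigma^2-\sigma_\infty^2})]$ is exactly $r_1\in(0,\pi/(2\sqrt{B_2^2-B_1^2})]$. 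Lemma \ref{lem-3.1}(1) then yields the threshold $\tau_2^{**}>0$ with asymptotic stability for $\tau_2\in[0,\tau_2^{**})$ and a Hopf bifurcation at $\tau_2=\tau_2^{**}$.

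The only real obstacle is the justification of the linearization at the origin, i.e.\ checking that the nonlinearity in \eqref{xb-433} is $C^1$ up to the boundary $\omega=0$ with first-slot derivative $a/3$; the limit computation above takes care of this, the cube-root singularity being harmless because $l$ is bounded near $0$ and solutions stay positive. Everything after that is the routine bookkeeping displayed here, and the substance is carried entirely by Lemma \ref{lem-3.1}.
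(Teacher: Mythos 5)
Your proof is correct and takes essentially the same route as the paper: linearize \eqref{xb-433} at the origin, read off $B_1=-a/3=-\mu\sigma_\infty$ and $B_2=a\Lambda=\mu\tilde\sigma$, compare $|B_1|$ with $B_2$ according to the sign of $\sigma_\infty-\tilde\sigma$, and invoke Lemma \ref{lem-3.1}. The only difference is that you additionally justify the differentiability of $\omega\mapsto\omega\,l(\omega^{1/3})$ at $\omega=0$ despite the cube-root singularity, a point the paper's proof passes over silently.
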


\begin{proof}
Linearizing \eqref{xb-433} around the trivial stationary solution yields
\begin{equation*}
\frac{d\omega(t)}{dt}=-\left(-\frac{a}{3}\right)\omega(t-\tau_1)-a\Lambda\omega(t-\tau_2).
\end{equation*}
When $\sigma_\infty>\tilde\sigma$, we see $0<a\Lambda<a/3=|-a/3|$ by \eqref{eq(2.5)}. When $\sigma_\infty<\tilde\sigma$, there holds $|-a/3|=a/3<a\Lambda$. Using Lemma 3.1 again, we conclude the assertions (i) and (ii), and complete the proof of this theorem.
\end{proof}

\section{Computer simulations}

In this section, using Matlab, we will do some numerical simulations for the model \eqref{xb-415}--\eqref{xb-418}. First, we take the following parameter values:
\begin{equation}
\label{eq4.1}
\Gamma=1,\quad \mu=1, \quad \tilde{\sigma}=2,\quad \alpha=0.2, \quad\sigma_\infty=3.3, \quad \tau_1=0.025\frac{\pi}{2\sqrt{A_2^2-A_1^2}}, \quad\omega^0=0.01,
\end{equation}
where $A_1$, $A_2$ are given by \eqref{eq(2.2)}. Evidently, all conditions of Theorem \ref{mainth-t434} are satisfied.

\vskip12mm
\setlength{\unitlength}{1cm}
\begin{picture}(7,7.5)
\put(1.5,-0.5){\centering\includegraphics[height=80mm,width=120mm]{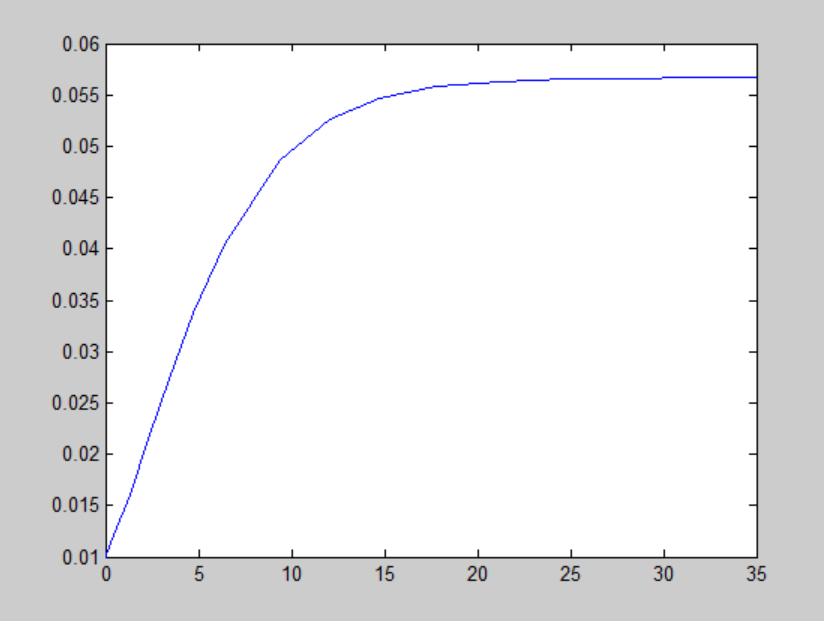}}
\put(2.2,-1){\small~Figure 1. An example of solution to equation \eqref{xb-433} for $\tau_2=0.0394$.}
\end{picture}
\setlength{\unitlength}{2cm}

\vskip12mm
\setlength{\unitlength}{1cm}
\begin{picture}(7,7.6)
\put(1.5,-0.5){\centering\includegraphics[height=80mm,width=120mm]{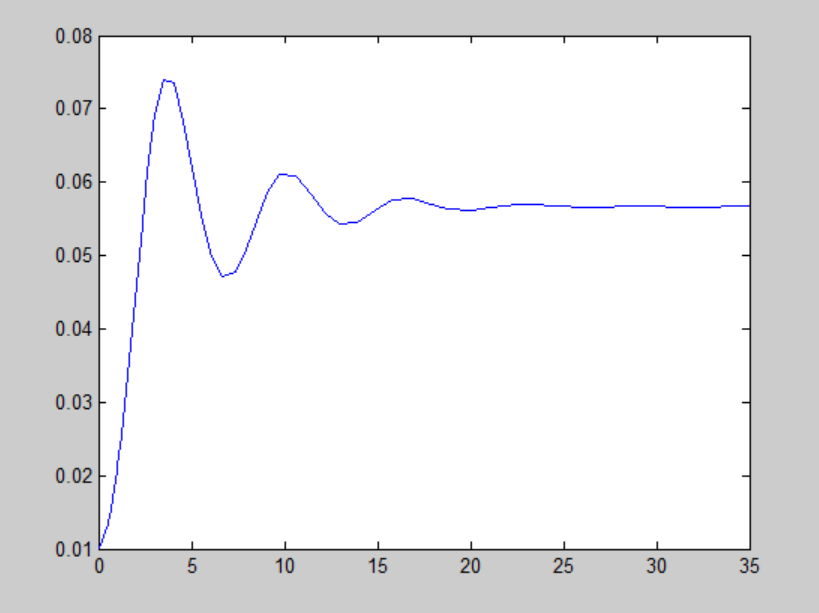}}
\put(2.2,-1){\small~Figure 2. An example of solution to equation \eqref{xb-433} for $\tau_2=0.5014$.}
\end{picture}
\setlength{\unitlength}{2cm}

\vskip12mm
\setlength{\unitlength}{1cm}
\begin{picture}(7,7.7)
\put(1.5,-0.5){\centering\includegraphics[height=80mm,width=120mm]{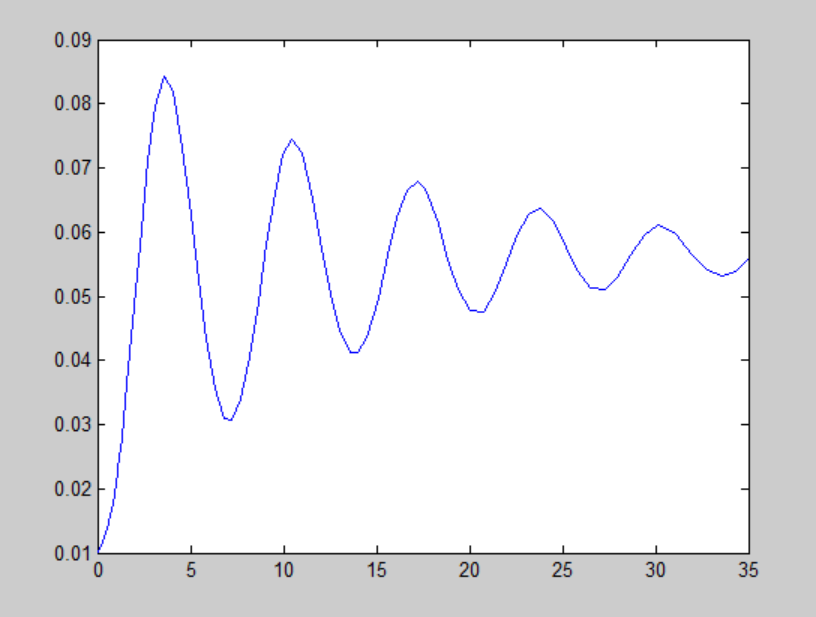}}
\put(2.2,-1){\small~Figure 3. An example of solution to equation \eqref{xb-433} for $\tau_2=0.5374$.}
\end{picture}
\setlength{\unitlength}{2cm}

\vskip12mm
\setlength{\unitlength}{1cm}
\begin{picture}(7,7.8)
\put(1.5,-0.5){\centering\includegraphics[height=80mm,width=120mm]{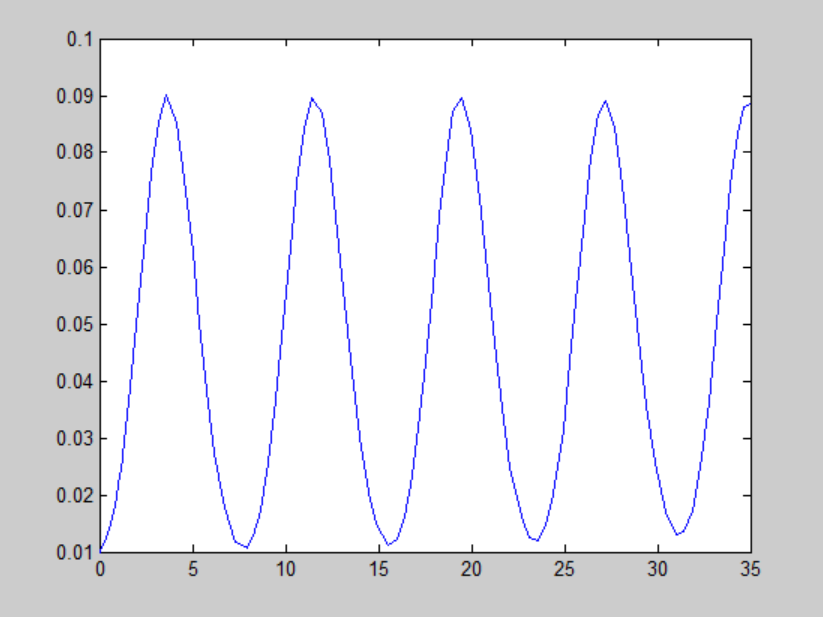}}
\put(2.2,-1){\small~Figure 4. An example of solution to equation \eqref{xb-433} for $\tau_2=0.5554$.}
\end{picture}
\setlength{\unitlength}{2cm}

\vskip12mm
\setlength{\unitlength}{1cm}
\begin{picture}(7,7.9)
\put(1.5,-0.5){\centering\includegraphics[height=80mm,width=120mm]{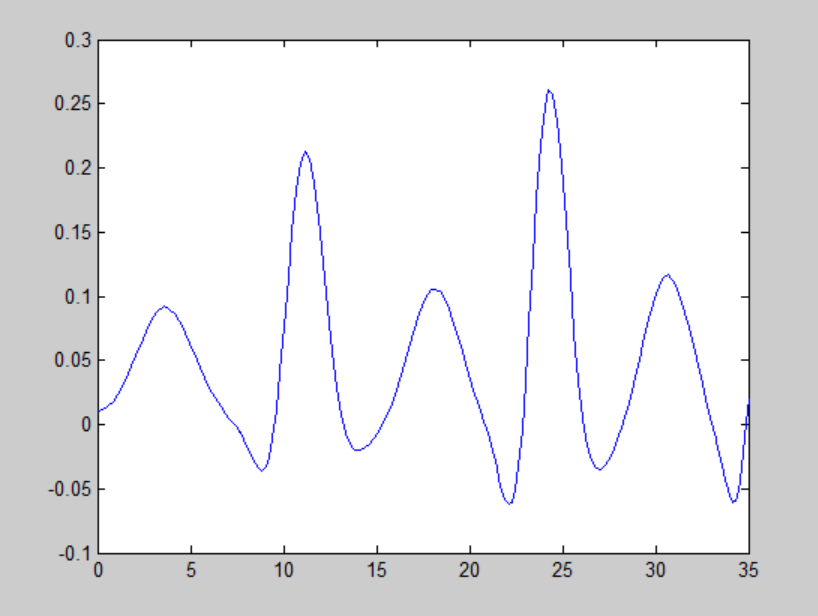}}
\put(2.2,-1){\small~Figure 5. An example of solution to equation \eqref{xb-433} for $\tau_2=0.5614$.}
\end{picture}
\setlength{\unitlength}{2cm}

\vskip12mm

In Figures 1--5, we present some examples of solutions to the equation \eqref{xb-433} for the same parameter values taken as \eqref{eq4.1} but different values of $\tau_2$. In this simulations, the Hopf bifurcation occurs at $\tau^*_2\approx 0.5556$. For small delays $\tau_2$, the solution tends to positive stationary state (see Figure 1). As $\tau_2$ increases, dumping oscillations arise (see Figures 2 and 3). When $\tau_2$ is very close to $\tau^*_2$, undumping oscillations are observed (see Figure 4). For some $\tau_2>\tau^*_2$, the solution does not remain positive (see Figure 5). These verify the results of Theorem \ref{mainth-t434}.

Next, taking the parameter values:
\begin{equation}
\label{eq4.2}
\Gamma=1,\quad \mu=1,\quad \tilde{\sigma}=2,\quad \sigma_\infty=3.3,\quad \omega^0=0.01,
\end{equation}
and different values of $\tau_1$, $\alpha$, respectively, we apply Matlab to obtain Tables \ref{tab1} and \ref{tab2}. Clearly, $A_2=2$ by \eqref{eq(2.2)}.
For $\alpha=0.2$, $1$, $2$, $3$, $4$, $5$, $6$, $7$, $8$, $9$, $10$, $20$, $30$, $40$, $50$, $60$, $70$, $80$, $90$, $100$, $1000$, $\infty$, where by $\alpha=\infty$, we mean the model \eqref{xb-415}--\eqref{xb-418} with the Robin boundary condition in \eqref{xb-416} replaced by the Dirichlet boundary condition \eqref{eq(1.1)}, using Matlab we find that there always holds $A_1\in[-1.8644, -1.5830]$; thus, $\pi/(2\sqrt{A_2^2-A_1^2})\in[1.2851, 2.1698]$. Hence, the conditions of Theorem \ref{mainth-t434} are fulfilled for every group of data.

\begin{table}[h]
 \centering\begin{tabular}{|c|c|c|c|c|c|c|c|c|c|c|c|}
\hline  \diagbox{$\tau_1$}{$\tau^*_2$}{$\alpha$} & 0.2 & 1 & 2 & 3 & 4 & 5 & 6 & 7 & 8 & 9 & 10 \\
\hline 0.05	& 0.5654 & 0.5665 & 0.5686 & 0.5696 & 0.5702 & 0.5706 & 0.5709 & 0.5711 & 0.5712 & 0.5714 & 0.5714 \\
\hline 0.1	& 0.6120 & 0.6128 & 0.6145 & 0.6153 & 0.6158 & 0.6161 & 0.6163 & 0.6164 & 0.6166 & 0.6166 & 0.6167 \\
\hline 0.15	& 0.6587 & 0.6593 & 0.6605 & 0.6611 & 0.6615 & 0.6617 & 0.6618 & 0.6619 & 0.6620 & 0.6621 & 0.6621 \\
\hline 0.2	& 0.7055 & 0.7059 & 0.7066 & 0.7070 & 0.7072 & 0.7074 & 0.7075 & 0.7075 & 0.7076 & 0.7076 & 0.7077 \\
\hline 0.25	& 0.7523 & 0.7525 & 0.7528 & 0.7530 & 0.7531 & 0.7531 & 0.7532 & 0.7532 & 0.7532 & 0.7533 & 0.7533 \\
\hline 0.3	& 0.7992 & 0.7992 & 0.7991 & 0.7990 & 0.7990 & 0.7990 & 0.7989 & 0.7989 & 0.7989 & 0.7989 & 0.7989 \\
\hline 0.35	& 0.8462 & 0.8459 & 0.8453 & 0.8451 & 0.8449 & 0.8448 & 0.8448 & 0.8447 & 0.8447 & 0.8446 & 0.8446 \\
\hline 0.4	& 0.8931 & 0.8926 & 0.8917 & 0.8912 & 0.8909 & 0.8907 & 0.8906 & 0.8905 & 0.8905 & 0.8904 & 0.8904 \\
\hline 0.5	& 0.9871 & 0.9862 & 0.9843 & 0.9834 & 0.9829 & 0.9826 & 0.9824 & 0.9822 & 0.9821 & 0.9820 & 0.9819 \\
\hline 0.6	& 1.0811 & 1.0797 & 1.0770 & 1.0757 & 1.0750 & 1.0745 & 1.0742 & 1.0739 & 1.0737 & 1.0736 & 1.0735 \\
\hline 0.7	& 1.1751 & 1.1733 & 1.1698 & 1.1680 & 1.1670 & 1.1664 & 1.1660 & 1.1656 & 1.1654 & 1.1652 & 1.1650 \\
\hline 0.8	& 1.2692 & 1.2669 & 1.2625 & 1.2603 & 1.2591 & 1.2583 & 1.2578 & 1.2574 & 1.2571 & 1.2569 & 1.2566 \\
\hline 0.9	& 1.3632 & 1.3604 & 1.3552 & 1.3527 & 1.3512 & 1.3502 & 1.3496 & 1.3491 & 1.3488 & 1.3485 & 1.3482 \\
\hline 1.0	& 1.4573 & 1.4541 & 1.4480 & 1.4450 & 1.4433 & 1.4422 & 1.4414 & 1.4408 & 1.4404 & 1.4401 & 1.4398 \\
\hline 1.1	& 1.5512 & 1.5476 & 1.5407 & 1.5373 & 1.5353 & 1.5340 & 1.5332 & 1.5325 & 1.5321 & 1.5317 & 1.5314 \\
\hline 1.2	& 1.6453 & 1.6412 & 1.6334 & 1.6296 & 1.6273 & 1.6259 & 1.6249 & 1.6242 & 1.6237 & 1.6232 & 1.6229 \\
\hline
\end{tabular}
\caption{Relations among $\alpha$, $\tau_1$ and $\tau_2^*$.}
\label{tab1}
\end{table}

\vskip6mm

\begin{table}[h]
 \centering\begin{tabular}{|c|c|c|c|c|c|c|c|c|c|c|c|}
\hline  \diagbox{$\tau_1$}{$\tau^*_2$}{$\alpha$}
        & 20 & 30 & 40 & 50 & 60 & 70 & 80 & 90 & 100 & 1000 & $\infty$ \\
\hline 0.05	& 0.5719 & 0.5720 & 0.5721 & 0.5721 & 0.5722 & 0.5722 & 0.5722 & 0.5722 & 0.5722 & 0.5723 & 0.5723 \\
\hline 0.1	& 0.6171 & 0.6172 & 0.6172 & 0.6173 & 0.6173 & 0.6173 & 0.6173 & 0.6173 & 0.6173 & 0.6174 & 0.6174 \\
\hline 0.15	& 0.6624 & 0.6625 & 0.6625 & 0.6625 & 0.6625 & 0.6626 & 0.6626 & 0.6626 & 0.6626 & 0.6626 & 0.6626 \\
\hline 0.2	& 0.7078 & 0.7079 & 0.7079 & 0.7079 & 0.7079 & 0.7079 & 0.7079 & 0.7079 & 0.7079 & 0.7080 & 0.7080 \\
\hline 0.25	& 0.7533 & 0.7533 & 0.7534 & 0.7534 & 0.7534 & 0.7534 & 0.7534 & 0.7534 & 0.7534 & 0.7534 & 0.7534 \\
\hline 0.3	& 0.7989 & 0.7989 & 0.7989 & 0.7989 & 0.7989 & 0.7989 & 0.7989 & 0.7989 & 0.7989 & 0.7989 & 0.7989 \\
\hline 0.35	& 0.8445 & 0.8445 & 0.8445 & 0.8444 & 0.8444 & 0.8444 & 0.8444 & 0.8444 & 0.8444 & 0.8444 & 0.8444 \\
\hline 0.4	& 0.8902 & 0.8901 & 0.8901 & 0.8900 & 0.8900 & 0.8900 & 0.8900 & 0.8900 & 0.8900 & 0.8900 & 0.8900 \\
\hline 0.5	& 0.9815 & 0.9814 & 0.9813 & 0.9813 & 0.9813 & 0.9813 & 0.9812 & 0.9812 & 0.9812 & 0.9812 & 0.9812 \\
\hline 0.6	& 1.0729 & 1.0727 & 1.0727 & 1.0726 & 1.0725 & 1.0725 & 1.0725 & 1.0725 & 1.0725 & 1.0724 & 1.0724 \\
\hline 0.7	& 1.1643 & 1.1641 & 1.1640 & 1.1639 & 1.1639 & 1.1638 & 1.1638 & 1.1638 & 1.1637 & 1.1636 & 1.1636 \\
\hline 0.8	& 1.2557 & 1.2555 & 1.2553 & 1.2552 & 1.2552 & 1.2551 & 1.2551 & 1.2550 & 1.2550 & 1.2549 & 1.2548 \\
\hline 0.9	& 1.3472 & 1.3468 & 1.3466 & 1.3465 & 1.3464 & 1.3464 & 1.3463 & 1.3463 & 1.3463 & 1.3461 & 1.3461 \\
\hline 1.0	& 1.4386 & 1.4381 & 1.4379 & 1.4378 & 1.4377 & 1.4377 & 1.4376 & 1.4376 & 1.4376 & 1.4373 & 1.4373 \\
\hline 1.1	& 1.5299 & 1.5295 & 1.5292 & 1.5291 & 1.5290 & 1.5289 & 1.5289 & 1.5288 & 1.5288 & 1.5285 & 1.5285 \\
\hline 1.2	& 1.6213 & 1.6208 & 1.6205 & 1.6203 & 1.6202 & 1.6201 & 1.6201 & 1.6200 & 1.6200 & 1.6197 & 1.6197 \\
        \hline
\end{tabular}
\caption{Relations among $\alpha$, $\tau_1$ and $\tau_2^*$.}
\label{tab2}
\end{table}

\vskip6mm

\setlength{\unitlength}{1cm}
\begin{picture}(7,7.9)
\put(1.5,-0.5){\centering\includegraphics[height=80mm,width=120mm]{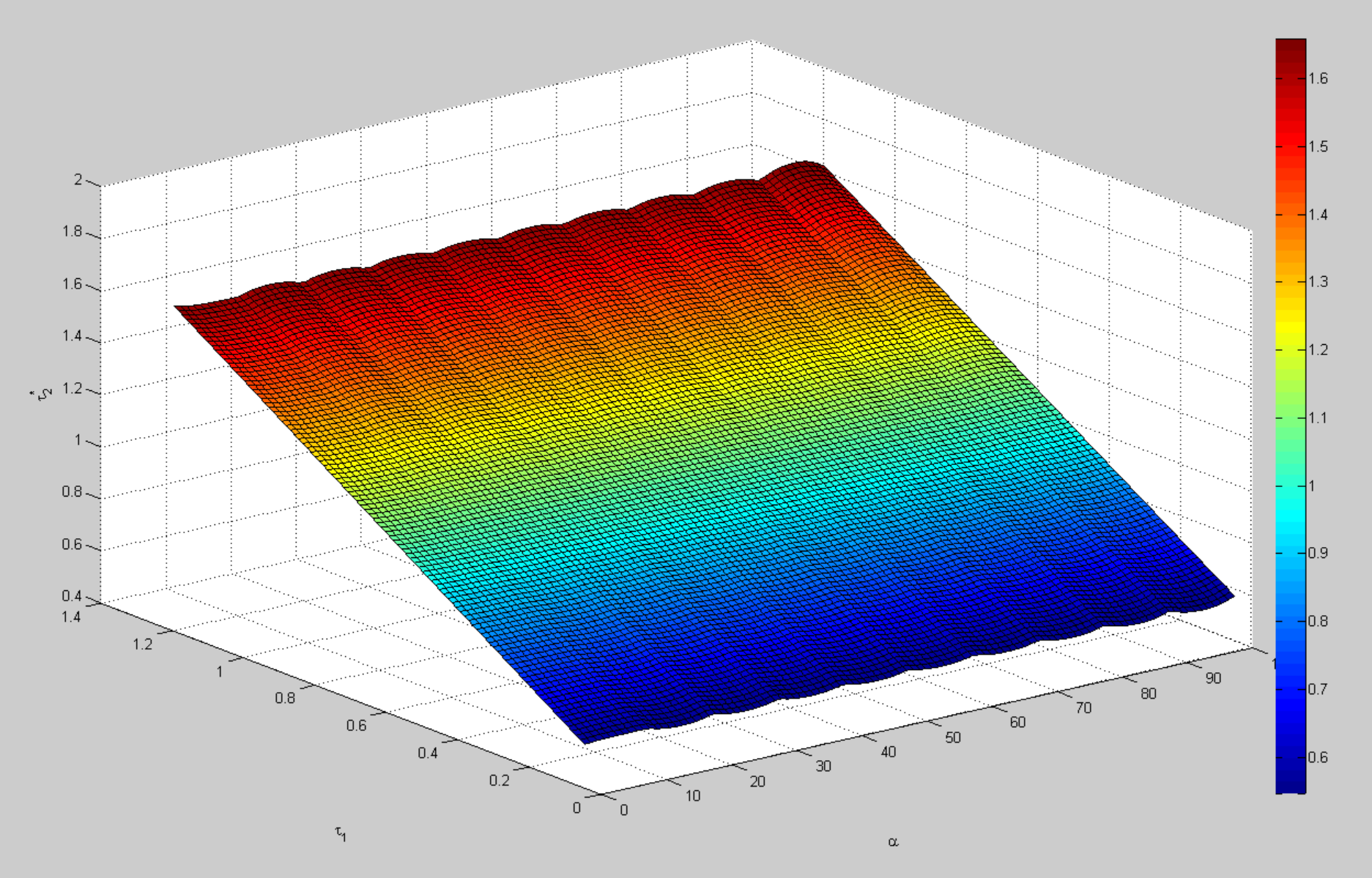}}
\put(3.5,-1){\small~Figure 6. The data of Tables \ref{tab1} and \ref{tab2} are drawn with Matlab. }
\end{picture}
\setlength{\unitlength}{2cm}
\vskip14mm

\newpage
As shown in Figure 6, generated by the data from Tables 1 and 2 with the help of Matlab, for fixed $\alpha$, $\tau^*_2$ is linearly increasing with respect to $\tau_1$; however, how $\tau^*_2$ varies as $\alpha$ varies is not obvious when $\tau_1$ is fixed. Nevertheless, one can see from the data, listed in Tables \ref{tab1} and \ref{tab2}, that when $\tau_1<0.3$, $\tau^*_2$ is nondecreasing with respect to $\alpha$, when $\tau_1\ge0.3$, $\tau^*_2$ is nonincreasing with respect to $\alpha$, and the value of the Hopf bifurcation point $\tau^*_2$ for $\alpha=1000$ is quite close to that for $\alpha=\infty$. In other words, the data in Tables \ref{tab1} and \ref{tab2} roughly reveal that the stability of steady state may be improved by increasing the time delay in proliferation $\tau_1$, or increasing the rate of angiogenesis if $\tau_1$ is small enough, or decreasing the rate of angiogenesis provided that $\tau_1$ is large. Finally, on the basis of a large angiogenesis intensity, increasing the rate of angiogenesis may have little effect on the Hopf bifurcation.

\section*{Acknowledgments}
This work was partly supported by the National Natural Science Foundation of China (No. 11861038, No. 11771156 and No. 12161045).

\end{document}